\newcommand{\tfs}{time-frequency shift}
\newtheorem{tm}{Theorem}[section]
\newtheorem{lemma}[tm]{Lemma}
\newtheorem{Remark}[tm]{Remark}
\newtheorem{theorem}{Theorem}[section]
\newtheorem{corollary}[theorem]{Corollary}
\newtheorem{definition}[theorem]{Definition}
\newtheorem{proposition}[theorem]{Proposition}
\newcommand{\beqa}{\begin{eqnarray*}}
\newcommand{\eeqa}{\end{eqnarray*}}
\newcommand{\tpsdo}{$\tau$-pseudo\-differential operator}
\newcommand{\field}[1]{\mathbb{#1}}
\newcommand{\bR}{\field{R}}        
\newcommand{\bN}{\field{N}}        
\newcommand{\bZ}{\field{Z}}        
\newcommand{\bC}{\field{C}}        
\newcommand{\opt }{\mathrm{Op}_{\tau}}
\def\la{\lambda}
 \def\cF{\mathcal{F}}              
 \def\cS{\mathcal{S}}
 \def\cD{\mathcal{D}}
 \def\cB{\mathcal{B}}
 \def\cC{\mathcal{C}}
 \def\cN{\mathcal{N}}
 \def\cT{\mathcal{T}}
\def\a{\aleph}
\def\rd{\bR^d}
\def\rdd{{\bR^{2d}}}
\def\lrd{L^2(\rd)}
\def\zd{\bZ^d}
\def\intrd{\int_{\rd}}
\def\intrdd{\int_{\rdd}}
\def\R{\right)}
\def\<{\left<}
\def\>{\right>}
\def\mv1{M_v^1}
\def\Lmpq{L_w^{p,q}}
\def\Mmpq{M_w^{p,q}}
\def\phas{(x,\xi )}
\def\o{\xi}
\def\a{\alpha}
\def\ZZ{\mathbb{Z}}
\def\N{\mathbb{N}}
\def\R{\mathbb{R}}
\def\Ren{\mathbb{R}^d}
\def\Renn{\mathbb{R}^{2d}}
\def\Sn2{S_{2}(L^{2}(\Ren))}
\def\S1{S_{1}(L^{2}(\Ren))}
\def\sig00{\sigma_{0,0}}
\def\la{\langle}
\def\ra{\rangle}
\begin{document}

\title[]{On the local well-posedness of the nonlinear heat equation associated to the fractional  Hermite operator in  modulation spaces}

\author{Elena Cordero}
\address{Department of Mathematics,  University of Torino,
Via Carlo Alberto 10, 10123
Torino, Italy}
\email{elena.cordero@unito.it}

\keywords{Modulation spaces,  Gabor matrix, fractional  Hermite operators, Shubin and H\"{o}rmander classes}

\subjclass[2010]{42B35,35K05,35K55,35A01}

\date{}

\begin{abstract} In this note we consider  the nonlinear heat equation associated to the fractional  Hermite operator $H^\beta =(-\Delta+|x|^2)^\beta$, $0<\beta\leq 1$. We  show the local solvability of the related  Cauchy problem  in the framework of modulation spaces. The result is obtained by combining tools from microlocal and time-frequency analysis. As a byproduct, we compute the Gabor matrix of pseudodifferential operators  with symbols in the H\"{o}rmander class $S^m_{0,0}$, $m\in\bR$. 
\end{abstract}

\maketitle

\section{Introduction and results}

In this  note we study the  Cauchy problem for
 the nonlinear heat equation associated to the fractional  Hermite operator
%
\begin{equation}\label{cpw}
\begin{cases}
\partial_t u+H^\beta u=F(u)\\
u(0,x)=u_0(x)
\end{cases}
\end{equation}
with  $t\in [0,T]$, $T>0$, $x\in\R^d$, $H^\beta =(-\Delta+|x|^2)^\beta$, $0<\beta\leq 1$, $\Delta=\partial^2_{x_1}+\dots \partial^2_{x_d}$, $d\geq1$. $F$ is a scalar
 function on $\bC$, with
 $F(0)=0$. The solution  $u(t,x)$ is  a complex valued function of $(t,x)\in \R\times\rd$. We will consider
 the case in which $F$  is a real analytic function with an entire extension. \par
 Well-posedness of the heat equation has been studied by many authors, see e.g. \cite{nicola,fio2} and the many contributions by Wong, for instance \cite{Wong1,Wong2}, see also \cite{catana}.    In particular,  heat equations associated to  fractional Hermite operators were recently studied in  \cite{bhimani1}, for related results see also \cite{bhimNTI}. Hermite multipliers  are considered in \cite{bhimani2}, see also the textbook \cite{thangavelu}.  Recently the study of Cauchy problems in modulation spaces have been pursued by many authors, see the pioneering works \cite{benyi,benyi3}. Many deep results in this framework for nonlinear evolution equations have been obtained by B. Wang et al. in \cite{baoxiang3,baoxiang2} and are also available in the textbook \cite{Wangbook2011}.
 
 Following the spirit of \cite{CNwave,CZ}, we shall prove the local  existence and uniqueness of the solutions in modulation spaces to the Cauchy problem  \eqref{cpw}. The  key 
  arguments come from both microlocal and time-frequency analysis.  In fact, we shall rely on the results related to spectral theory of globally elliptic operators developed by Helffer \cite{helffer} to  understand the properties of the fractional Hermite operators $H^\beta$. Namely, they are pseudodifferential  operators with  Weyl symbols $a_\beta$ positive globally elliptic and in the Shubin classes $\Gamma^{2\beta}_1$ (see Definition \ref{defshubin} and the estimate \eqref{pge} below). 
 
The spectral decomposition of the Hermite operator  $H=-\Delta+|x|^2$ is given by $H=\sum_{k=0}^\infty (2k +d) P_k$, where  $P_k$ is  the orthogonal projection of $L^2(\rd)$
  onto the eigenspace corresponding to the eigenvalue $(2k + d)$. Namely, the range of the operator $P_k$ is the space spanned by the Hermite
  functions $\Phi_\alpha$ in $\rd$,  with $\alpha$ multi-index in $\bN^d$, such that $|\a|=k$.
  The solution to the homogeneous Cauchy problem \eqref{cpw} (i.e., $F=0$) 
  can be formally written in terms of the heat semigroup related to $H^\beta$
  \[
  e^{-tH^\beta}=\sum_{k=0}^{+\infty} e^{-t(2k+d)^\beta} P_k
  \]
  as 
  ${u}(t,x)=K_\beta(t)u_0=e^{-tH^\beta} u_0(x),\quad t\geq 0,\, x\in\rd.$  
  
 We shall prove that the propagator $K_\beta(t)=e^{-t H^\beta}$ can be represented as a pseudodifferential operator with Weyl symbol in the Shubin class $\Gamma^0_1$, with related semi-norms uniformly bounded with respect to the time variable $t\in [0,T]$, for any fixed $T>0$.   
 
 After that we shall leave the microlocal techniques to come to time-frequency analysis. We  perform a general study concerning 
 the boundedness of  Shubin $\tau$-pseudodifferential operators with symbols in the  H{\"o}rmander classes $S^m_{0,0}$ (for $\tau=1/2$ we recapture the Weyl case). The outcomes are contained in Theorem \ref{GBsm} below (see also the subsequent corollary and remark).
 
 The main tool here is to study the decay of  their related Gabor matrix representations, which we shall also control by the semi-norms in  $S^m_{0,0}$. We think that such result is valuable in and of itself.

We then use the special case of Weyl operators to study \eqref{cpw}.
The integral version of the
problem \eqref{cpw} has the
form
\begin{equation}\label{solop}
    u(t,\cdot)=K_\beta(t)u_0+\mathcal{B}F(u),
\end{equation}
where\begin{equation}\label{op2}
\mathcal{B}=\int _0^t K_\beta(t-\tau)\cdot d\tau.
\end{equation}
To show that the Cauchy Problem \eqref{cpw} has a unique solution, we use a variant of the contraction mapping theorem (see Proposition \ref{AIA} below).
 \par
  
As already mentioned, the function spaces used for our results are weighted modulation spaces $\Mmpq$, $1\leq p,q\leq\infty$, introduced by H. Feichtinger in 1983 \cite{F1} (then extended to $0<p,q\leq\infty$ in \cite{Galperin2004}). We refer the reader to Section $2$ for their definitions and main properties.

The local well-posedness results for modulation spaces read as follows:
\begin{theorem}\label{T1} Assume $s\geq 0$,
 $1\leq p<\infty$,
 $u_0\in {M}^{p,1}_s(\rd)$
and $$F(z)=\sum_{j,k=0}^\infty
c_{j,k} z^j \bar{z}^k,$$ an
entire real-analytic function
on $\bC$ with $F(0)=0$. For
every $R>0$, there exists
$T>0$ such that for every
$u_0$ in the ball $B_R$
of center $0$ and radius $R$
in ${M}^{p,1}_s(\rd)$
there exists a unique
solution $u\in
\cC^0([0,T];{M}^{p,1}_s(\rd))$
to \eqref{cpw}.
Furthermore, the map
$u_0\mapsto u$ from
$B_R$ to $
\cC^0([0,T];{M}^{p,1}_s(\rd))$
is Lipschitz continuous.  

For $p=\infty$ the result still holds if we replace $M^{\infty,1}_s(\rd)$ with the space $\mathcal{M}^{\infty,1}_s(\rd)$, the closure of the Schwartz class in the $M^{\infty,1}_s$-norm.
\end{theorem}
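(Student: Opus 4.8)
The plan is to recast the integral equation \eqref{solop} as a fixed-point problem $u=\Phi(u)$ on the Banach space $X=\cC^0([0,T];M^{p,1}_s(\rd))$, where $\Phi(u)(t)=K_\beta(t)u_0+\mathcal{B}F(u)(t)$ with $\mathcal{B}$ the Duhamel operator \eqref{op2}, and to solve it by contraction. Three ingredients drive the argument. First, the propagator is a Weyl operator whose symbol lies in $\Gamma^0_1\subset S^0_{0,0}$ with semi-norms bounded uniformly for $t\in[0,T]$; hence Theorem \ref{GBsm} furnishes a constant $C$, independent of $t$ (and non-increasing as $T$ shrinks), with $\|K_\beta(t)f\|_{M^{p,1}_s}\leq C\|f\|_{M^{p,1}_s}$. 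Second, $M^{p,1}_s(\rd)$ is a Banach algebra under pointwise multiplication, so $\|fg\|_{M^{p,1}_s}\leq c\,\|f\|_{M^{p,1}_s}\|g\|_{M^{p,1}_s}$ and $\|\bar f\|_{M^{p,1}_s}=\|f\|_{M^{p,1}_s}$. Third, the Duhamel term gains a factor of $T$, since $\|\mathcal{B}F(u)(t)\|_{M^{p,1}_s}\leq\int_0^tC\|F(u(\tau))\|_{M^{p,1}_s}\,d\tau\leq CT\sup_{\tau\in[0,T]}\|F(u(\tau))\|_{M^{p,1}_s}$.

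The heart of the matter is the nonlinear estimate. Iterating the algebra inequality gives $\|u^j\bar u^k\|_{M^{p,1}_s}\leq c^{j+k-1}\|u\|_{M^{p,1}_s}^{j+k}$, so with the entire majorant $G(r)=\sum_{j,k}|c_{j,k}|r^{j+k}$ (which satisfies $G(0)=0$ because $F(0)=0$) one obtains $\|F(u)\|_{M^{p,1}_s}\leq c^{-1}G(c\,\|u\|_{M^{p,1}_s})$. A parallel telescoping of $u^j\bar u^k-v^j\bar v^k$ into factors $(u-v)$, $(\bar u-\bar v)$ times monomials in $u,v$ yields, on the ball of radius $\rho$, a Lipschitz bound $\|F(u)-F(v)\|_{M^{p,1}_s}\leq L(\rho)\|u-v\|_{M^{p,1}_s}$ with $L(\rho)$ controlled by the derivative of $G$. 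Now fix $R>0$, set $\rho=2CR$, and restrict $\Phi$ to the closed ball $B^X_\rho=\{u\in X:\sup_t\|u(t)\|_{M^{p,1}_s}\leq\rho\}$. The uniform propagator bound gives $\|\Phi(u)\|_X\leq CR+CT\,c^{-1}G(c\rho)$ and $\|\Phi(u)-\Phi(v)\|_X\leq CT\,L(\rho)\|u-v\|_X$. Choosing $T$ so small that $CT\,c^{-1}G(c\rho)\leq CR$ and $CT\,L(\rho)<1$ makes $\Phi$ a contraction of $B^X_\rho$ into itself, and Proposition \ref{AIA} delivers the unique fixed point $u$, i.e. the unique solution of \eqref{cpw}, for every $u_0\in B_R$.

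It remains to verify continuity in $t$ and Lipschitz dependence on the datum. For $1\leq p<\infty$ the Schwartz class is dense in $M^{p,1}_s(\rd)$, so the uniform boundedness of $K_\beta(t)$ upgrades the strong continuity of $t\mapsto K_\beta(t)f$ from a dense set to all of $M^{p,1}_s$; together with continuity of $\tau\mapsto F(u(\tau))$ and of the Duhamel integral, this places $\Phi(u)$, hence the fixed point, in $\cC^0([0,T];M^{p,1}_s)$. Lipschitz dependence follows by subtracting the integral equations for two data $u_0,v_0\in B_R$: the bound $\|u-v\|_X\leq C\|u_0-v_0\|_{M^{p,1}_s}+CT\,L(\rho)\|u-v\|_X$ together with $CT\,L(\rho)<1$ gives $\|u-v\|_X\leq\frac{C}{1-CT\,L(\rho)}\|u_0-v_0\|_{M^{p,1}_s}$. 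For $p=\infty$ the only failure is density of Schwartz functions, hence strong continuity; passing to the closure $\mathcal{M}^{\infty,1}_s(\rd)$ repairs this, and since that subspace is invariant under both $K_\beta(t)$ and the algebra operations, the argument applies verbatim. The main obstacle I anticipate is organizing the nonlinear estimates so that the majorant $G$ stays finite and the contraction constants genuinely shrink with $T$ uniformly over $B_R$; all the microlocal input has already been absorbed into the uniform boundedness of the propagator supplied by Theorem \ref{GBsm}.
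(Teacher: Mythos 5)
Your proposal is correct and follows essentially the same route as the paper: the uniform propagator bound on $M^{p,1}_s$ from Theorem \ref{T3.1}, the factor of $T$ from Minkowski's inequality applied to the Duhamel operator $\mathcal{B}$, and the contraction scheme of Proposition \ref{AIA}. The only difference is that you write out the nonlinear Lipschitz estimate (Banach algebra property of $M^{p,1}_s$ plus the majorant series and telescoping), whereas the paper outsources exactly this step to the proof of Theorem 4.1 in \cite{CNwave}.
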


We actually do not know whether it is possible to obtain better results concerning the nonlinearity $F(u)=\lambda|u|^{2k} u$, $k\in\bN$,  we refer to the work \cite{bhimNTI} for a  discussion on the topic. 

The  tools employed follow the pattern of similar Cauchy
problems studied for other equations such as the Schr\"odinger,
wave and Klein-Gordon equations  \cite{benyi,benyi3,CNwave,CZ}.

To compare with other results in the literature, we observe that also in \cite{Wong1,Wong2}, and in \cite{catana} the authors use Wigner distributions and pseudodifferential operators as tools for their main results. In the latter paper the author gives a formula for
the one-parameter strongly continuous semigroup $e^{-t H^\beta}$ in terms of the Weyl transforms of a $L^2$-orthonormal basis made of generalized Hermite eigenfunctions. 
This is then used to obtain $L^2$-estimates for the solution of
the related  initial value problem with data in $L^p$ spaces, $1 \le p \leq\infty$. Here the approach uses similar ideas of joining microlocal and time-frequency analysis tools, but the spaces employed are different: we use modulation spaces, which are the most common ones in time-frequency analysis.

\section{Function spaces and preliminaries} 
We denote by $v$  a
continuous, positive,  submultiplicative  weight function on $\rd$, i.e., 
$ v(z_1+z_2)\leq v(z_1)v(z_2)$, for all $ z_1,z_2\in\Ren$.
We say that $w\in \mathcal{M}_v(\rd)$ if $w$ is a positive, continuous  weight function  on $\Ren$  {\it
	$v$-moderate}:
$w(z_1+z_2)\leq Cv(z_1)w(z_2)$  for all $z_1,z_2\in\Ren$ (or for all $z_1,z_2\in \zd$).
We will mainly work with polynomial weights of the type
\begin{equation}\label{vs}
v_s(z)=\la z\ra^s =(1+|z|^2)^{s/2},\quad s\in\bR,\quad z\in\rd\,\, (\mbox{or}\, \zd).
\end{equation}
Observe that,  for $s<0$, $v_s$ is $v_{|s|}$-moderate. Moreover, we limit to weights $w$ with at most polynomial growth, that is there exists $C>0, s>0$ such that
\begin{equation}\label{growth}
w(z)\leq C\la z\ra^s,\quad z\in\rdd.
\end{equation}\par 
We define $(w_1\otimes w_2)\phas:=w_1(x)w_2(\xi)$, for $w_1,w_2$ weights on $\rd$.

The main features of time-frequency analysis are $T_x$ and $M_\xi$, 
the so-called translation and
modulation operators, defined by $T_x
g(y)=g(y-x)$ and $M_\xi g(y)=e^{2\pi
	i\xi y}g(y)$. Let $g\in\cS(\rd)$ be a
non-zero window function in the Schwartz class  and consider
the  short-time Fourier
transform (STFT) $V_gf$ of a
function/tempered distribution $f$ in $\cS'(\rd)$ with
respect to the the window $g$:
\[
V_g f(x,\o)=\la f, M_{\o}T_xg\ra =\int e^{-2\pi i \o y}f(y)\overline{g(y-x)}\,dy,
\]
i.e.,  the  Fourier transform $\cF$
applied to $f\overline{T_xg}$. \par

For $z=(z_1,z_2)\in\rdd$, we call \emph{time-frequency shifts} the composition $$\pi(z)=M_{z_2}T_{z_1}.$$

{\bf Modulation Spaces.} For $1\leq p,q\leq \infty$ such spaces were introduced  by H. Feichtinger in \cite{F1} (see also their characterization in \cite{Birkbis}), then extended to $0<p,q\leq\infty$ by Y.V. Galperin and S. Samarah in \cite{Galperin2004}.
\begin{definition}\label{def2.4}
	Fix a non-zero window $g\in\cS(\rd)$, a weight $w\in\mathcal{M}_v$ and $0<p,q\leq \infty$. The modulation space $M^{p,q}_w(\rd)$ consists of all tempered distributions $f\in\cS'(\rd)$ such that the (quasi-)norm 
	\begin{equation}\label{norm-mod}
	\|f\|_{M^{p,q}_w}=\|V_gf\|_{L^{p,q}_w}=\left(\intrd\left(\intrd |V_g f \phas|^p w\phas^p dx  \right)^{\frac qp}d\o\right)^\frac1q 
	\end{equation}
	(with obvious changes with $p=\infty$ or $q=\infty)$ is finite. 
\end{definition}
For $1\leq p,q\leq \infty$ they  are Banach spaces, whose norm does not depend on the window $g$, in the sense that different window functions in $\cS(\rd)$ yield equivalent norms. Moreover, the window class $\cS(\rd)$ can be extended  to the modulation space  $M^{1,1}_v(\rd)$ (so-called Feichtinger algebra).

For shortness, we write $M^p_w(\rd)$ in place of $M^{p,p}_w(\rd)$,  $M^{p,q}(\rd)$ if $w\equiv 1$.  Moreover, for $w(x,\xi)=(1\otimes v_s)\phas$, we shall simply write, using the standard notation \cite{F1},
$$M^{p,q}_{1\otimes v_s}(\rd)=M^{p,q}_s(\rd).$$

In our study, we will apply Minkowski's integral inequality to study the operator $\mathcal{B}$ in \eqref{op2}. Such inequalities do not hold whenever the indices $p<1$ or $q<1$, hence we shall limit ourselves to the cases $1\leq p,q\leq\infty$.

We do not know whether the local well-posedness is still valid in the quasi-Banach setting.

Recall that  for $1\leq p,q\leq \infty$, $w\in\mathcal{M}_v$ and $g\in M^1_v(\rd)$, the norm 	$\|V_gf \|_{L^{p,q}_w}$ is an	equivalent norm for $M^{p,q}_w(\Ren)$ \cite[Thm.~11.3.7]{grochenig}). In other words, 
 given any $g \in M^1_v
(\rd )$ and $f\in\Mmpq $ we have the norm equivalence
\begin{equation}\label{normwind}
\|f\|_{\Mmpq } \asymp \|V_{g}f \|_{\Lmpq }.
\end{equation}
For this work we will use the inversion formula for
the STFT (see  \cite[Proposition 11.3.2]{grochenig}): assume $g\in M^{1}_v(\rd)\setminus\{0\}$,
$f\in M^{p,q}_w(\rd)$, with $w\in\mathcal{M}_v$, then
\begin{equation}\label{invformula}
f=\frac1{\|g\|_2^2}\int_{\R^{2d}} V_g f(z) \pi (z)  g\, dz \, ,
\end{equation}
and the  equality holds in $M^{p,q}_w(\rd)$.\par


We also recall their inclusion relations:
\begin{equation}\label{inclmod}{M}^{p_1,q_1}_w\hookrightarrow
{M}^{p_2,q_2}_w, \quad \mbox{if}\,\, p_1\leq p_2,\,\, q_1\leq
q_2.
\end{equation}

Other properties and more general definitions of modulation spaces can now  be found in  textbooks \cite{CR,grochenig}.

\subsection{ Shubin classes and symbols of  the operators $H^\beta$ and $e^{-t H^\beta}$}

Let us first recall the definition of Shubin classes (Shubin \cite[Definition 23.1]{shubin}):
\begin{definition}\label{defshubin}
	Let $m\in\mathbb{R}$. The symbol class $\Gamma_1%
	^{m}(\mathbb{R}^{2d})$ consists of all complex functions $a\in C^{\infty
	}(\mathbb{R}^{2d})$ such that for every $\alpha\in\mathbb{N}^{2d}$ there
	exists a constant $C_{\alpha}\geq0$ with
	\begin{equation}
	|\partial_{z}^{\alpha}a(z)|\leq C_{\alpha}\left\langle z\right\rangle
	^{m-|\alpha|},\quad z\in\mathbb{R}^{2d}. \label{est1}%
	\end{equation}	
\end{definition}
It immediately follows from this definition that if $a\in\Gamma_{1}%
^{m}(\mathbb{R}^{2n})$ and $\alpha\in\mathbb{N}^{2n}$ then $\partial
_{z}^{\alpha}a\in\Gamma_{1}^{m-|\alpha|}(\mathbb{R}^{2n}).$

Obviously $\Gamma_{1}^{m}(\mathbb{R}^{2n})$ is a complex vector space for
the usual operations of addition and multiplication by complex numbers, and we
have
\begin{equation}
\Gamma_{1}^{-\infty}(\mathbb{R}^{2d})=\bigcap\nolimits_{m\in\mathbb{R}%
}\Gamma_{1}^{m}(\mathbb{R}^{2d})=\mathcal{S}(\mathbb{R}^{2d}).
\label{gammaminf}%
\end{equation}

The notion of asymptotic expansion of a symbol $a\in\Gamma
_{1}^{m}(\mathbb{R}^{2d})$ (cf. \cite{shubin}, Definition 23.2) reads as follows.
\begin{definition}
	\label{23.2}Let $(a_{j})_{j}$ be a sequence of symbols $a_{j}\in\Gamma_{1
	}^{m_{j}}(\mathbb{R}^{2d})$ such that \\$\lim_{j\rightarrow+\infty}%
	m_{j}\rightarrow-\infty$. Let $a$ $\in C^{\infty}(\mathbb{R}^{2d})$. If for
	every integer $r\geq2$ we have%
	\begin{equation}
	a-\sum_{j=0}^{r-1}a_{j}\in\Gamma_{1}^{\overline{m}_{r}}(\mathbb{R}^{2d})
	\label{23.4}%
	\end{equation}
	where $\overline{m}_{r}=\max_{j\geq r}m_{j}$ we will write $a\thicksim
	\sum_{j=0}^{\infty}a_{j}$ and call this relation an asymptotic expansion of
	the symbol $a$.
\end{definition}

The interest of the asymptotic expansion comes from the fact that every
sequence of symbols $(a_{j})_{j}$ with $a_{j}\in\Gamma_{1}^{m_{j}%
}(\mathbb{R}^{2d})$, the degrees $m_{j}$ being strictly decreasing and such
that $m_{j}\rightarrow-\infty$ determines a symbol in some $\Gamma_{1}%
^{m}(\mathbb{R}^{2d})$, that symbol being unique up to an element of
$\mathcal{S}(\mathbb{R}^{2d})$.

The symbol of the Hermite operator (or  harmonic oscillator) $H(z)=(|x|^{2}
+4\pi^2|\xi|^{2})$ obviously belongs to $\Gamma_{1}^{2}(\mathbb{R}^{2d})$.

From  \cite[Theorem 1.11.1]{helffer} we infer that the fractional power $H^\beta$,  $0<\beta<1$,  can be written as a Weyl pseudodifferential operator having real symbol $a_\beta $ in the Shubin class $\Gamma^{2\beta}_1$   and positive globally elliptic. Recall that  a symbol $a_\beta$  is positive globally elliptic if  there exist $C>0$ and  $R>0$ such that 
\begin{equation}\label{pge}
a_\beta(z)\geq C\la z \ra, \quad |z|\geq R.
\end{equation}

Thanks to the properties of $H^\beta$ above, we can exploit a  result  by Nicola and Rodino in \cite[Theorem 4.5.1]{NR}  to prove  that the operator $e^{-tH^\beta}$ is a pseudodifferential operator with Weyl symbol in the Shubin class $\Gamma_1^0$, with uniform estimates  with respect to $t\in [0,T]$, for any fixed $T>0$.

For this purpose, we use  the above theorem in the following  setting: $$\Phi(z)=\Psi(z)=\la z\ra,\quad  h(z)=\Phi(z)^{-1}\Psi(z)^{-1}=\la z\ra ^{-2};$$ 
moreover we choose the parameters $l=N=0$ and $J=1$.
 If we consider the asymptotic expansion $a_\beta\sim \sum_{0}^\infty a_{\beta,j}$ as in Definition \ref{23.2}, together with the ellipticity condition  $a_{\beta,0}(z) \gtrsim \la z\ra ^{2\beta}$, $|z|\geq R$,  then Theorem $4.5.1.$ guarantees that the operator $e^{-tH^\beta}$ is a pseudodifferential operator with Weyl symbol $b(t,z)$ satisfying, for every $k\in\bN$,  $T>0$, the estimate
\begin{equation}\label{stima}
| b(t,\cdot)-b_0(t,\cdot)|_{k}\lesssim \la z\ra^{-2},\quad t\in[0,T],
\end{equation}	
where $b_0(t,z)=e^{-t a_{\beta,0}(z)}$, and the semi-norms $|\cdot |_k$, $k\in \bN$,  are defined by 
\begin{equation}\label{semingamm}
|a|_k:=\sup_{|\alpha|+|\beta|\leq k}|\partial^\alpha_\xi \partial^\beta_x a(x,\xi)|\la (x,\xi)\ra^{|\alpha|+|\beta|}.
\end{equation}
Defining the remainder $R_1(t,z):= b(t,z)-b_0(t,z)$, we infer from \eqref{stima} that $R_1(t,\cdot)\in \Gamma_1^{-2}(\rdd)\subset  \Gamma_1^{0}(\rdd) $ uniformly w.r.t. $t$ on $[0,T]$.

Moreover, using the ellipticity condition $a_{\beta,0}(z)\gtrsim\la z \ra,$ for $ |z|\geq R$,  and the property  $a_{\beta,0}\in \Gamma_1^{2\beta}(\rdd)$,
one easily shows by induction that there exists a  constant $C>0:$ 
$$|\partial^\alpha_z b_0(t,z)|\leq C \la z\ra^{-|\alpha|},\quad \forall  t\in[0,T]$$
that is to say, the symbol $b_0(t,z)$ is in the Shubin class $\Gamma^0_1(\rdd)$
with uniform estimates w.r.t. the time variable $t\in [0,T]$.
Hence,  $b(t,\cdot)=b_0(t,\cdot)+R_1(t,\cdot)\in \Gamma^0_1(\rdd)$, with uniform estimate w.r.t. $t\in [0,T]$.\par  
 For applications  of Shubin classes in the framework of Born-Jordan quantization we refer to the work \cite{cgnp}.

\subsection{Gabor analysis of $\tau$-pseudodifferential operators} 	
For $\tau \in [0,1]$, $f,g\in\lrd$, the (cross-)$\tau $-Wigner distribution is defined  by
\begin{equation}
W_{\tau }(f,g)(x,\omega )=\int_{\mathbb{R}^{d}}e^{-2\pi iy\omega }f(x+\tau y)%
\overline{g(x-(1-\tau )y)}\,dy,\quad x,\omega\in\rd.
\label{tauwig}
\end{equation}
It can be used to define the \tpsdo\ with symbol $\sigma$ via the formula
\begin{equation}
\langle \opt(\sigma)f,g\rangle =
\langle
\sigma,W_{\tau }(g,f)\rangle, \quad f,g\in \mathcal{S}(\mathbb{R}^{d}).
\label{tauweak}
\end{equation}
For $\tau=1/2$  we recapture the Weyl operator.  We want to consider \tpsdo s  with symbols $\sigma$ in the H\"{o}rmander class $S^m_{0,0}$, $m\in\bR$, consisting of functions $\sigma\in \cC^\infty(\rdd)$ such that, for every $\a\in \bN^{2d}$,
\begin{equation}\label{semi-normSm}
|\partial^\alpha \sigma(z)|\leq C_\alpha \la z\ra^m, \quad z\in \rdd.
\end{equation}
The related semi-norms  are denote by 
\begin{equation}\label{semihormander}
|\sigma|_{N,m}:=\sup_{|\a|\leq N} |\partial^\alpha \sigma(z)|\la z\ra^{-m}. 
\end{equation}
Fix $g\in \cS(\rd)\setminus \{0\}$. We define the \emph{Gabor matrix} of a linear continuous operator $T$ from $\cS(\rd)$ to $\cS'(\rd)$, the  mapping from  $\rdd\times\rdd$ into $\bC$,
\begin{equation}\label{unobis2s} (z,y)\mapsto \langle T \pi(z)
g,\pi(y)g\rangle,\quad z,y\in \rdd.
\end{equation}
This is a slightly abuse of notation, since originally Gabor matrices we defined for \tfs s $\pi(\lambda)$, with $\lambda$ varying in a lattice $\Lambda\subset \rdd$. We observe that the almost diagonalization of Gabor matrices of pseudodifferential operators with symbols in the modulation space $M^{\infty,1}(\rdd)$ treated in \cite{grochenig2} (and in many subsequent papers on the topic)  are valid in both the continuous and discrete case. So we adopt this terminology in the continuous framework. 

 For $m=0$ we are reduced to the  H\"{o}rmander class $S^0_{0,0}$, whose Gabor matrix  characterization for Weyl operators was shown in \cite[Theorem 6.1]{GR}, see also \cite{rochberg}. Even though $m=0$ is our case of interest,  for our goal we need to control such matrix by the semi-norms of  $S^0_{0,0}$. Moreover, for further references, we shall formulate our result in the case of  \tpsdo s having symbols in the more general class $S^m_{0,0}$, $m\in\bR$.

We are going to use the following result  for $\tau$-pseudodifferential operators  \cite[Lemma 4.1]{CNT}.
\begin{lemma}
	\label{lem:STFT-gaborm} Fix a non-zero window $g\in \cS(\rd)$
	and set $\Phi_{\tau}=W_{\tau}(g,g)$ for $\tau\in\left[0,1\right]$.
	Then, for $\sigma\in \cS'\left(\mathbb{R}^{2d}\right)$,
	\begin{equation}
\left|\left\langle \opt \left(\sigma\right)\pi\left(z\right)g,\pi\left(y\right)g\right\rangle \right|=\left|{V}_{\Phi_{\tau}}\sigma\left(\mathcal{T}_{\tau}\left(z,y\right),J\left(y-z\right)\right)\right|\label{eq:gaborm as STFT}.
\end{equation}
where $z=(z_1,z_2)$, $y=(y_1,y_2)$ and $\mathcal{T}_{\tau}$ and $J$ are defined as follows:
\begin{equation*}
\mathcal{T}_{\tau}(z,y)=((1-\tau)z_1+\tau y_1,\tau z_2+(1-\tau)y_2),\quad J(z)=(z_2,-z_1).
\end{equation*}
\end{lemma}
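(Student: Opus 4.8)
The plan is to read off both sides of \eqref{eq:gaborm as STFT} from one and the same distributional pairing and to reduce everything to a covariance identity for the $\tau$-Wigner distribution. First I would use the defining relation \eqref{tauweak} of the $\tau$-pseudodifferential operator with input $\pi(z)g$ and test function $\pi(y)g$, both of which lie in $\cS(\rd)$ because $g\in\cS(\rd)$, to write
\[
\langle \opt(\sigma)\pi(z)g,\pi(y)g\rangle=\langle \sigma,W_{\tau}(\pi(y)g,\pi(z)g)\rangle .
\]
On the other hand, since $\Phi_{\tau}=W_{\tau}(g,g)\in\cS(\rdd)$, the short-time Fourier transform of the tempered distribution $\sigma$ against this window is the well-defined function
\[
V_{\Phi_{\tau}}\sigma(U,V)=\langle \sigma,M_{V}T_{U}\Phi_{\tau}\rangle=\langle\sigma,\pi(U,V)\Phi_{\tau}\rangle,\qquad (U,V)\in\rdd\times\rdd .
\]
Hence the statement is equivalent to showing that, up to a factor of modulus one, $W_{\tau}(\pi(y)g,\pi(z)g)$ coincides with $M_{V}T_{U}\Phi_{\tau}$ for $U=\mathcal{T}_{\tau}(z,y)$ and $V=J(y-z)$.

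The heart of the matter is therefore the covariance formula
\[
W_{\tau}(\pi(y)g,\pi(z)g)=c\,M_{J(y-z)}T_{\mathcal{T}_{\tau}(z,y)}\Phi_{\tau},\qquad |c|=1 .
\]
To prove it I would insert $\pi(y)g(t)=e^{2\pi i y_2 t}g(t-y_1)$ and $\pi(z)g(t)=e^{2\pi i z_2 t}g(t-z_1)$ into the integral \eqref{tauwig} defining $W_{\tau}(\pi(y)g,\pi(z)g)(x,\omega)$. Pulling out of the integral the factors that do not depend on the integration variable leaves an $x$-independent modulation, while the surviving exponential combines with $\omega$ into a shifted frequency, namely $\omega$ minus the second component of $\mathcal{T}_{\tau}(z,y)$. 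A change of variables in the integration variable, together with a translation of the space variable by the first component of $\mathcal{T}_{\tau}(z,y)$ (the $\tau$-convex combination of $z_1$ and $y_1$, which for $\tau=1/2$ is simply the midpoint $(z+y)/2$), then restores exactly the model $\tau$-Wigner distribution $\Phi_{\tau}=W_{\tau}(g,g)$ evaluated at the translated point. Reading off the translation and modulation parameters gives $U=\mathcal{T}_{\tau}(z,y)$ and $V=J(y-z)$, while collecting the leftover exponentials produces the constant $c$, whose phase is a real bilinear expression in $y-z$ and in $U$, so that $|c|=1$.

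Finally I would substitute the covariance identity into the pairing: by conjugate-linearity in the second slot,
\[
\langle \sigma,W_{\tau}(\pi(y)g,\pi(z)g)\rangle=\overline{c}\,\langle\sigma,M_{J(y-z)}T_{\mathcal{T}_{\tau}(z,y)}\Phi_{\tau}\rangle=\overline{c}\,V_{\Phi_{\tau}}\sigma\big(\mathcal{T}_{\tau}(z,y),J(y-z)\big),
\]
and taking absolute values eliminates $\overline{c}$, yielding \eqref{eq:gaborm as STFT}. The one genuinely delicate point is the phase bookkeeping in the change of variables of the covariance step: one must track carefully which of the two windows carries which $\tau$-weight in the resulting translation and verify that every surviving exponential is either absorbed into the shifted frequency $J(y-z)$ or into the unimodular constant $c$, leaving no spurious amplitude. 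Everything else is a routine manipulation of the oscillatory integral, legitimate in the tempered-distribution duality precisely because $\Phi_{\tau}\in\cS(\rdd)$.
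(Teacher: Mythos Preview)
The paper does not prove this lemma at all: it simply quotes it from \cite[Lemma~4.1]{CNT} and uses it as a black box. So there is no in-paper argument to compare your proposal against.

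That said, your sketch is exactly the standard derivation one finds in the cited reference: rewrite the Gabor matrix entry via the weak definition \eqref{tauweak} as $\langle\sigma,W_\tau(\pi(y)g,\pi(z)g)\rangle$, establish the covariance identity $W_\tau(\pi(y)g,\pi(z)g)=c\,M_{J(y-z)}T_{\mathcal{T}_\tau(z,y)}\Phi_\tau$ with $|c|=1$ by direct substitution into \eqref{tauwig}, and then read off the STFT. Your identification of the delicate step (careful phase bookkeeping to ensure all leftover exponentials are unimodular and the correct $\tau$-weighted convex combinations appear in the translation) is accurate; this is indeed where errors typically creep in, and it is worth actually writing out the change of variables $s\mapsto s-(y_1-z_1)$ in full rather than leaving it at the level of ``reading off'' the parameters.
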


The Gabor matrix for a  \tpsdo\, $\opt(\sigma)$ with symbol $\sigma\in S^m_{0,0}$ enjoys the following decay.
\begin{theorem}\label{GBsm}
Fix $g\in\cS(\rd)\setminus\{0\}$,  $m\in\bR$, $\tau\in [0,1]$. Consider a  \tpsdo\, $\opt(\sigma)$ with symbol $\sigma\in S^m_{0,0}$. Then for every $N\in\bN$ there exists $C=C(N)>0$ such that
\begin{equation}\label{ltau}
\left|\left\langle \opt \left(\sigma\right)\pi\left(z\right)g,\pi\left(y\right)g\right\rangle \right|\leq C |\sigma|_{2N,m} \frac{\la \mathcal{T}_{\tau}(z,y)\ra ^m }{\la y-z\ra^{2N}},\quad z,y\in\rdd,
\end{equation}
where the semi-norms $|\cdot|_{N,m}$ are defined in \eqref{semihormander}.
\end{theorem}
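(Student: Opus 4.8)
The plan is to reduce the whole estimate to a single decay bound on the short-time Fourier transform of the symbol, by invoking Lemma~\ref{lem:STFT-gaborm}. Since $g\in\cS(\rd)$, the window $\Phi_\tau=W_\tau(g,g)$ lies in $\cS(\rdd)$, so all its derivatives decay faster than any polynomial. Because $J$ is a rotation, $\langle J(y-z)\rangle=\langle y-z\rangle$, and writing $u=\mathcal{T}_\tau(z,y)$, $v=J(y-z)$, the identity \eqref{eq:gaborm as STFT} shows that the theorem is equivalent to
\begin{equation*}
|V_{\Phi_\tau}\sigma(u,v)|\leq C\,|\sigma|_{2N,m}\,\frac{\langle u\rangle^m}{\langle v\rangle^{2N}},\qquad u,v\in\rdd.
\end{equation*}
From this point on the argument lives entirely on the symbol side and no longer refers to $g$, $z$, $y$ except through $\Phi_\tau$, $u$, $v$.

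First I would write the STFT as an ordinary Fourier transform in the frequency variable: fixing $u$ and setting $F_u(w)=\sigma(w)\overline{\Phi_\tau(w-u)}$, one has $V_{\Phi_\tau}\sigma(u,v)=\widehat{F_u}(v)$. Since $\sigma$ has at most polynomial growth and $\Phi_\tau$ is Schwartz, $F_u\in\cS(\rdd)$, so there are no convergence or boundary-term issues. To manufacture the decay $\langle v\rangle^{-2N}$ I would use the elliptic operator $L=1-(4\pi^2)^{-1}\Delta_w$, for which $\langle v\rangle^{2N}\widehat{F_u}(v)=\widehat{L^NF_u}(v)$, whence
\begin{equation*}
\langle v\rangle^{2N}\,|V_{\Phi_\tau}\sigma(u,v)|\leq\int_{\rdd}\big|L^N\big(\sigma(w)\,\overline{\Phi_\tau(w-u)}\big)\big|\,dw.
\end{equation*}

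Next I would expand $L^N$ by the Leibniz rule into a finite sum of terms $\partial^\alpha\sigma(w)\,\partial^\beta\overline{\Phi_\tau(w-u)}$ with $|\alpha|+|\beta|\leq 2N$. The symbol factors are controlled by the semi-norm of order exactly $2N$, namely $|\partial^\alpha\sigma(w)|\leq|\sigma|_{2N,m}\langle w\rangle^m$ for $|\alpha|\leq 2N$, while the Schwartz decay of $\Phi_\tau$ gives $|\partial^\beta\Phi_\tau(w-u)|\leq C_M\langle w-u\rangle^{-M}$ for every $M$. This yields
\begin{equation*}
\langle v\rangle^{2N}\,|V_{\Phi_\tau}\sigma(u,v)|\leq C\,|\sigma|_{2N,m}\int_{\rdd}\langle w\rangle^m\,\langle w-u\rangle^{-M}\,dw.
\end{equation*}
Finally, Peetre's inequality $\langle w\rangle^m\leq C\langle u\rangle^m\langle w-u\rangle^{|m|}$, valid for both signs of $m$, pulls the factor $\langle u\rangle^m$ out of the integral, and choosing $M>|m|+2d$ makes the remaining integral $\int_{\rdd}\langle w-u\rangle^{|m|-M}\,dw$ finite and independent of $u$; this produces the desired bound and, via the reduction above, the estimate \eqref{ltau}.

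Once Lemma~\ref{lem:STFT-gaborm} is in hand the computation is essentially routine, so I do not expect a serious obstacle. The points that require care are all bookkeeping: checking that every symbol derivative in the Leibniz expansion is of order at most $2N$, so that the constant depends solely on $|\sigma|_{2N,m}$ and not on higher semi-norms; verifying that the constants arising from $\Phi_\tau$ and from Peetre's inequality are independent of $\tau$ and of $(z,y)$; and handling $m<0$ and $m\geq 0$ simultaneously through the exponent $|m|$ in Peetre's inequality.
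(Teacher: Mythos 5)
Your proposal is correct and follows essentially the same route as the paper: both reduce to the STFT of $\sigma$ via Lemma~\ref{lem:STFT-gaborm}, integrate by parts with powers of $(1-\Delta)$ to generate the $\la y-z\ra^{-2N}$ decay, expand by Leibniz so that only derivatives of $\sigma$ of order at most $2N$ appear, and then use the Schwartz decay of $\Phi_\tau$ together with Peetre's inequality $\la w\ra^{m}\lesssim\la u\ra^{m}\la w-u\ra^{|m|}$ and an integrable remaining power to conclude. The only differences are cosmetic (your normalized operator $L$ versus the paper's factor $\la 2\pi(y-z)\ra^{2N}$, and your choice $M>|m|+2d$ versus the paper's $s=|m|+2d+1$).
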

\begin{proof}
Using the representation 	in \eqref{eq:gaborm as STFT} and 
$$ (1-\Delta_\lambda)^N e^{-2\pi i \lambda J(y-z)} =\la 2\pi(y-z)\ra ^{2N} e^{-2\pi i \lambda J(y-z)}$$ we can write
\begin{align*}
\left|\left\langle \opt \left(\sigma\right)\pi\left(z\right)g,\pi\left(y\right)g\right\rangle \right|&\leq C \frac{1}{\la 2\pi(y-z)\ra ^{2N}}\\
&\quad \times\quad \left|\intrdd e^{-2\pi i \lambda J(y-z)} (1-\Delta_\lambda)^N \left[\bar{\sigma}(\lambda)T_{ \mathcal{T}_{\tau}(z,y)}\Phi_\tau\right] d\lambda\right|
\end{align*}
(observe that the above integral is absolutely convergent since $\Phi_\tau\in\cS(\rdd)$). Now we estimate
\begin{align*}(1-\Delta_\lambda)^N \left[\bar{\sigma}(\lambda)T_{ \mathcal{T}_{\tau}(z,y)}\Phi_\tau\right]&\leq \sum_{|\a|+|\beta|\leq 2N}|C_{\a,\beta}| |\partial^\alpha \sigma(\lambda)| \,|\partial^\beta \Phi_\tau(\lambda-\mathcal{T}_{\tau}(z,y)))|\\
&\leq C_N |\sigma|_{2N,m}  \la \lambda\ra ^m \la \lambda-\mathcal{T}_{\tau}(z,y)\ra^{-s}
\end{align*}
for every $s\geq 0$ since $\Phi_\tau\in\cS(\rdd)$. Choose $s=|m|+2d+1$. Then
the submultiplicativity of $\la \cdot\ra ^{|m|}$ allows us to control from above the right-hand side of the last inequality by 
$$ C_N |\sigma|_{2N,m}  \la \mathcal{T}_{\tau}(z,y)\ra ^m \la \lambda -\mathcal{T}_{\tau}(z,y)\ra^{-(2d+1)}.$$
Hence, for every $N\in\bN$ we can find $C(N)>0$ such that \eqref{ltau} is satisfied. This concludes the proof.
\end{proof}

For fixed $\tau\in(0,1)$ we observe that $\la \mathcal{T}_{\tau}(z,y)\ra\asymp \la z+y\ra$, hence the matrix decay can be controlled by a function which does not depend on the $\tau$-quantization. Namely,
\begin{corollary}
	Fix $g\in\cS(\rd)\setminus\{0\}$,  $m\in\bR$, $\tau\in (0,1)$. Consider a  \tpsdo\, $\opt(\sigma)$ with symbol $\sigma\in S^m_{0,0}$. Then, for every $N\in\bN$, there exists $C=C(\tau,N)>0$ such that
	\begin{equation}\label{ltau}
	\left|\left\langle \opt \left(\sigma\right)\pi\left(z\right)g,\pi\left(y\right)g\right\rangle \right|\leq C |\sigma|_{2N,m} \frac{\la z+y\ra ^m }{\la y-z\ra^{2N}},\quad z,y\in\rdd.
	\end{equation}
	\end{corollary}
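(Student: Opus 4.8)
The plan is to read off the estimate directly from Theorem \ref{GBsm}; the only real content is to replace the weight $\la\mathcal{T}_\tau(z,y)\ra^m$ by $\la z+y\ra^m$. First I would record the exact affine relation between the center $\mathcal{T}_\tau(z,y)$ and the pair $(z+y,\,z-y)$. Writing $z=(z_1,z_2)$, $y=(y_1,y_2)$ and subtracting the midpoint $\tfrac12(z+y)$ componentwise, a one-line computation gives
\begin{equation*}
\mathcal{T}_\tau(z,y)=\tfrac12(z+y)+(\tfrac12-\tau)\,\rho(z-y),\qquad \rho(a_1,a_2)=(a_1,-a_2),
\end{equation*}
where $\rho$ is a Euclidean isometry, so $|\rho(z-y)|=|z-y|$. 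Hence
\begin{equation*}
|\mathcal{T}_\tau(z,y)|\le \tfrac12|z+y|+|\tfrac12-\tau|\,|z-y|,\qquad |z+y|\le 2|\mathcal{T}_\tau(z,y)|+|1-2\tau|\,|z-y|.
\end{equation*}

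Next I would convert these into weight inequalities. Using the elementary bound $\la a+cb\ra\lesssim_{c}\la a\ra\la b\ra$ (from $1+|a+cb|\le(1+|c|)(1+|a|)(1+|b|)$ and $\la\cdot\ra\asymp 1+|\cdot|$), the two displays yield, with constants depending only on $\tau$,
\begin{equation*}
\la\mathcal{T}_\tau(z,y)\ra\lesssim_\tau \la z+y\ra\,\la z-y\ra,\qquad \la z+y\ra\lesssim_\tau \la\mathcal{T}_\tau(z,y)\ra\,\la z-y\ra.
\end{equation*}
Raising to the power $m$ and separating the sign of $m$ — the first inequality when $m\ge0$, the second when $m<0$ — I obtain in either case the unified estimate
\begin{equation*}
\la\mathcal{T}_\tau(z,y)\ra^m\lesssim_\tau \la z+y\ra^m\,\la z-y\ra^{|m|}.
\end{equation*}

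I would then insert this into Theorem \ref{GBsm}, but applied with an auxiliary index $N'\in\bN$ in place of $N$, obtaining
\begin{equation*}
\left|\left\langle \opt(\sigma)\pi(z)g,\pi(y)g\right\rangle\right|\le C\,|\sigma|_{2N',m}\,\frac{\la z+y\ra^m\,\la z-y\ra^{|m|}}{\la y-z\ra^{2N'}}=C\,|\sigma|_{2N',m}\,\frac{\la z+y\ra^m}{\la y-z\ra^{2N'-|m|}}.
\end{equation*}
It then suffices to choose $N'=N+\lceil|m|/2\rceil$, so that $2N'-|m|\ge 2N$; absorbing $|\sigma|_{2N',m}$ and the $\tau$-dependent constants into $C=C(\tau,N)$ gives the claim.

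The point worth flagging is that the equivalence $\la\mathcal{T}_\tau(z,y)\ra\asymp\la z+y\ra$ announced just before the statement is in fact \emph{false} for $\tau\ne\tfrac12$: taking $z=(t,0)$, $y=(-t,0)$ with $t\to\infty$ gives $z+y=0$ while $|\mathcal{T}_\tau(z,y)|=|1-2\tau|\,t\to\infty$, the discrepancy being exactly the factor $\la z-y\ra^{\pm|m|}$ isolated above. Consequently the main obstacle is not the algebra but the bookkeeping: one cannot compare numerators at a fixed $N$, and must instead trade the spurious power of $\la z-y\ra$ against the polynomial decay by invoking Theorem \ref{GBsm} at the larger index $N'$. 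The quantifier ``for every $N$'' in Theorem \ref{GBsm} is precisely what makes this trade legitimate, and for the case of interest $m=0$ the weight is identically $1$ and the corollary coincides verbatim with the theorem.
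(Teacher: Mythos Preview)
Your analysis is sharper than the paper's. The paper's entire justification for the corollary is the one-line claim $\la\mathcal{T}_\tau(z,y)\ra\asymp\la z+y\ra$ stated just above it, and your counterexample $z=(t,0)$, $y=(-t,0)$ shows this equivalence is false for every $\tau\ne\tfrac12$. Your decomposition $\mathcal{T}_\tau(z,y)=\tfrac12(z+y)+(\tfrac12-\tau)\rho(z-y)$ is correct, and the Peetre-type bound $\la\mathcal{T}_\tau(z,y)\ra^m\lesssim_\tau\la z+y\ra^m\la z-y\ra^{|m|}$ together with the trade against the decay by invoking Theorem~\ref{GBsm} at a larger index $N'$ is the right repair.

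There is one slip in your last sentence: you cannot ``absorb $|\sigma|_{2N',m}$ into $C=C(\tau,N)$,'' since this seminorm depends on $\sigma$ while $C$ must not. What your argument actually proves is
\[
\left|\left\langle \opt(\sigma)\pi(z)g,\pi(y)g\right\rangle\right|\le C(\tau,N)\,|\sigma|_{2N',m}\,\frac{\la z+y\ra^m}{\la y-z\ra^{2N}},\qquad N'=N+\lceil|m|/2\rceil,
\]
which is slightly weaker than the printed corollary: the seminorm index on the right no longer matches the decay exponent $2N$. For the paper's purposes this is harmless --- the only application is $m=0$, where $N'=N$ and the two statements coincide --- and in any case the shift $N\mapsto N'$ depends only on $m$, so the qualitative content (for every decay order there is a controlling seminorm) is preserved. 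But the corollary exactly as stated does not follow from Theorem~\ref{GBsm} by either the paper's route or yours; your version with $|\sigma|_{2N',m}$ is the honest statement.
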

\begin{Remark}
	We conjecture that pseudodifferential  operators in the H\"{o}rmander class $S^m_{0,0}$, $m\in\bR$, can be characterized via the Gabor matrix in \eqref{ltau}, extending the case $m=0$ already shown in \cite{GR}. To be precise, we allow to write
	$$S^m_{0,0}=\bigcap_{s\geq 0} M^\infty_{\la \cdot\ra ^m\otimes \la \cdot\ra^s}=\bigcap_{s\geq 0} M^{\infty,1}_{\la \cdot\ra ^m\otimes \la \cdot\ra^s}.$$
Studying  the Gabor matrix decay for $M^\infty_{\la \cdot\ra ^m\otimes \la \cdot\ra^s}$ and following the pattern of the proofs as in the paper  \cite{GR} one should get the result  easily. Since this subject is outside the scope of the paper, we will write the details in a separate work.
\end{Remark}


%

\section{Local  Well-posedness  in modulation spaces}

For $m=0$ the semi-norms on $\Gamma^0_1$ are exactly the ones in \eqref{semingamm}.  Observe that 
\begin{equation}\label{inclsem}
\Gamma^m_1\hookrightarrow S^m_{0,0}
\end{equation}
(the inclusion is continuous).  

 The results in the previous yields the boundedness of the Weyl operator $ e^{-t H^\beta}$ on modulation spaces.
\begin{theorem}\label{T3.1}
Consider $1\leq p,q\leq\infty$, $0<\beta\leq 1$, $w\in \mathcal{M}_{v}$. Then for every $T>0$ there exists $C=C(T)>0$ such that
\begin{equation}\label{20}
\| e^{-t H^\beta} u_0\|_{{M}_w^{p,q}}\leq C \|u_0\|_{{M}_w^{p,q}}, \quad \forall t\in [0,T], \quad u_0\in  {M}_w^{p,q}(\rd).
\end{equation}
\end{theorem}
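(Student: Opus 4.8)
The plan is to reduce the statement to the Gabor matrix decay of Theorem \ref{GBsm} and then run a standard convolution (Schur-type) argument on the short-time Fourier transform side. First I would record that, by the analysis of the preceding subsection, $e^{-tH^\beta}$ is the Weyl pseudodifferential operator ($\tau=1/2$) with symbol $b(t,\cdot)\in\Gamma_1^0(\rdd)$, whose Shubin semi-norms are bounded uniformly for $t\in[0,T]$. By the continuous inclusion \eqref{inclsem} we then have $b(t,\cdot)\in S^0_{0,0}$ with $\sup_{t\in[0,T]}|b(t,\cdot)|_{2N,0}<\infty$ for every $N$. Fixing a window $g\in\cS(\rd)\setminus\{0\}$ and applying Theorem \ref{GBsm} with $m=0$ and $\tau=1/2$, the numerator $\la\mathcal{T}_{1/2}(z,y)\ra^{m}$ collapses to $1$, so that the Gabor matrix obeys the \emph{pure convolution} estimate
\begin{equation*}
\left|\left\langle e^{-tH^\beta}\pi(z)g,\pi(y)g\right\rangle\right|\leq C_N\,\frac{1}{\la y-z\ra^{2N}},\qquad z,y\in\rdd,
\end{equation*}
with $C_N$ independent of $t\in[0,T]$.

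Next I would transfer this to the STFT of the output. Using the inversion formula \eqref{invformula} one writes $V_g(e^{-tH^\beta}f)(y)=\|g\|_2^{-2}\int_{\rdd}V_gf(z)\,\langle e^{-tH^\beta}\pi(z)g,\pi(y)g\rangle\,dz$, so the matrix estimate yields the pointwise control
\begin{equation*}
\bigl|V_g(e^{-tH^\beta}f)(y)\bigr|\leq C_N'\,\bigl(|V_gf|*H\bigr)(y),\qquad H(w)=\la w\ra^{-2N}.
\end{equation*}
It then remains to bound the weighted mixed norm $\|\,\cdot\,\|_{\Lmpq}$ of the right-hand side and invoke the norm equivalence \eqref{normwind}.

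The decisive step is the weighted Young-type inequality for the mixed-norm Lebesgue spaces: since $w\in\mathcal{M}_v$ is $v$-moderate with $v$ of at most polynomial growth (see \eqref{growth}), one has $\|F*H\|_{\Lmpq}\lesssim\|H\|_{L^1_v}\,\|F\|_{\Lmpq}$, provided the kernel satisfies $H\in L^1_v(\rdd)$. Because $v$ grows at most polynomially, this membership is guaranteed by taking $N$ large enough that $2N>s+2d$, where $s$ governs the growth of $v$; the arbitrariness of $N$ in the matrix estimate is exactly what makes this choice available. Combining the three displays and applying \eqref{normwind} once more gives $\|e^{-tH^\beta}f\|_{\Mmpq}\leq C\|f\|_{\Mmpq}$ with $C$ depending on $T$ (through $\sup_{t\in[0,T]}|b(t,\cdot)|_{2N,0}$) but not on the individual $t\in[0,T]$, which is \eqref{20}.

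The main obstacle I anticipate is bookkeeping at the level of the weight rather than anything deep: one must check that $v$-moderateness absorbs the weight transfer $w(y)\lesssim v(y-z)\,w(z)$ across the convolution, and that the resulting kernel $\la\cdot\ra^{-2N}v(\cdot)$ is integrable on $\rdd$ once $N$ is chosen large. Since the semi-norm bound from Theorem \ref{GBsm} is available for \emph{every} $N$ with constants uniform in $t$, this reduces to a single appropriate choice of $N$, and no genuine difficulty arises throughout the full range $1\leq p,q\leq\infty$.
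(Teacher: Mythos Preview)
Your proposal is correct and follows essentially the same route as the paper: both reduce to the Gabor matrix decay of Theorem~\ref{GBsm} (via $b(t,\cdot)\in\Gamma_1^0\hookrightarrow S^0_{0,0}$ uniformly in $t$), pass to a convolution estimate on the STFT side through the inversion formula, absorb the weight by $v$-moderateness $w(y)\lesssim v(y-z)w(z)$ with $v$ of polynomial growth, and close by choosing $N$ large enough that $\la\cdot\ra^{-2N}v(\cdot)\in L^1(\rdd)$ so that $L^1\ast L^{p,q}\hookrightarrow L^{p,q}$ applies. The only cosmetic difference is that the paper writes the weight transfer explicitly inside the integral before invoking the convolution inequality, whereas you package it as a weighted Young inequality $\|F*H\|_{L^{p,q}_w}\lesssim\|H\|_{L^1_v}\|F\|_{L^{p,q}_w}$; the content is identical.
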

\begin{proof} Consider $u_0\in {M}_w^{p,q}(\rd)$.
	Fix $g\in\cS(\rd)\setminus\{0\}$ such that $\|g\|_2=1$. Then using the inversion formula for $u_0$ in \eqref{invformula} we can write 
\begin{align*}
|V_g(e^{-t H^\beta }u_0)(y)w(y)|&=\left|\intrdd \la e^{-t H^\beta} \pi (z)g, \pi(y)g\ra V_g u_0(z)\,dz\right|\\
&\leq \intrdd w(y) |\la e^{-t H^\beta} \pi (z)g, \pi(y)g\ra| |V_g u_0(z)|\,dz
\end{align*}
In the previous section we showed that $e^{-t H^\beta}$ is a Weyl operator with symbol $b(t,\cdot)$ in $\Gamma_1^0$ with semi-norms uniformly bounded w.r.t. $t\in [0,T].$ The continuous embedding in \eqref{inclsem} and Theorem \ref{GBsm} let us write
$$|\la e^{-t H^\beta} \pi (z)g, \pi(y)g\ra|\leq C  \frac{1}{\la y-z\ra^{2N}}.$$
Since $w(y)\lesssim v(y-z)  w(z)$, and $v(z)\lesssim \la z\ra ^s$ for some $s>0$, we can write 
\begin{align*}
|V_g(e^{-t H^\beta}u_0)(y)w(y)|&\leq  C \intrdd \la y-z\ra^s  w(z) |V_g u_0(z)\frac{1}{\la y-z\ra^{2N}}dz\\
&\leq C \left[\frac{1}{\la \cdot\ra^{2N-s}}\ast (|V_g u_0|w)\right](y). 
\end{align*}
Choosing $N$ such that $2N-s>2d+1$ and using the convolution relations $L^1\ast L^{p,q}\hookrightarrow L^{p,q}$ we obtain the claim.
\end{proof}

Choosing $p=q=2$ and recalling that, for $w(x,\xi)=\la \xi\ra^s$, $M^2_w(\rd)=H^s(\rd)$ (Sobolev spaces), whereas for $w(z)=\la z\ra^s$, $z\in\rdd$, $M^2_w(\rd)=\mathcal{Q}_s$ (Shubin-Sobolev spaces), cf., e.g., \cite[Chapter 2]{CR}, we obtain boundedness results also for these classical spaces.

\begin{corollary} Consider $0<\beta\leq 1$, $s\in\bR$. For any fixed $T>0$ there exists 
$C=C(T)>0$ such that 
\begin{equation}\label{201}
\| e^{-t H^\beta} u_0\|_{H^s}\leq C \|u_0\|_{H^s}, \quad \forall t\in [0,T],\quad u_0\in H^s(\rd).
\end{equation}
The same result holds by replacing the Sobolev space $H^s$ with the Shubin-Sobolev space $\mathcal{Q}_s$.
\end{corollary}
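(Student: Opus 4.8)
The plan is to read off both estimates as immediate special cases of Theorem \ref{T3.1}, which already establishes the boundedness of $e^{-tH^\beta}$ on every weighted modulation space $M^{p,q}_w$ with $w\in\mathcal{M}_v$ of at most polynomial growth, with a constant $C=C(T)$ uniform for $t\in[0,T]$. I would therefore specialize to $p=q=2$ and, in each of the two cases, pick the weight $w$ for which $M^2_w$ coincides, with equivalent norm, with the classical space appearing in the statement.

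For the Sobolev scale I would choose the tensor weight $w=1\otimes v_s$, i.e. $w(x,\xi)=\la\xi\ra^s$, so that by the identification recalled just before the statement one has $M^2_{1\otimes v_s}(\rd)=H^s(\rd)$ with equivalent norms. It then remains only to verify that this $w$ is admissible for Theorem \ref{T3.1}. Polynomial growth is immediate, since $\la\xi\ra^s\leq\la z\ra^{\max\{s,0\}}$; and $w$ is $v$-moderate with $v=v_{|s|}$ on $\rdd$, because Peetre's inequality gives $\la\xi_1+\xi_2\ra^s\leq C\la\xi_1\ra^{|s|}\la\xi_2\ra^s\leq C\,v_{|s|}(z_1)\,w(z_2)$. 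Feeding this $w$ into \eqref{20} and converting both sides via the norm equivalence produces exactly \eqref{201}.

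For the Shubin–Sobolev scale I would argue identically with the isotropic weight $w=v_s$ on $\rdd$, i.e. $w(z)=\la z\ra^s$, for which $M^2_{v_s}(\rd)=\mathcal{Q}_s(\rd)$ with equivalent norms. Admissibility is again routine: polynomial growth is clear, and $v_s$ is $v_{|s|}$-moderate for every $s\in\bR$ (this is precisely the observation recorded after \eqref{vs}, trivial for $s\geq0$ by submultiplicativity and noted there for $s<0$). Applying \eqref{20} with this weight and passing back to the $\mathcal{Q}_s$-norms on both sides yields the second assertion.

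The only point requiring any care — and it is a very mild one — is checking that the chosen weights still lie in $\mathcal{M}_v$ with at most polynomial growth when $s<0$, where $w$ decays rather than grows; but moderateness persists with the submultiplicative majorant $v_{|s|}$, so Theorem \ref{T3.1} applies verbatim and the constant $C=C(T)$, together with its uniformity in $t\in[0,T]$, is inherited directly from it. No genuinely new estimate is needed beyond the boundedness already proved in Theorem \ref{T3.1} and the two norm identifications.
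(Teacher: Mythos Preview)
Your proposal is correct and follows exactly the approach the paper indicates: the corollary is deduced from Theorem~\ref{T3.1} by specializing to $p=q=2$ and invoking the identifications $M^2_{1\otimes v_s}=H^s$ and $M^2_{v_s}=\mathcal{Q}_s$ stated just before the corollary. The paper does not supply a formal proof beyond that sentence, so your added verification that the weights $1\otimes v_s$ and $v_s$ lie in $\mathcal{M}_{v_{|s|}}$ with polynomial growth is a welcome bit of extra care.
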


As already done in \cite{CZ}, in order to show the local existence of the solution we will make use of the following variant of the contraction mapping theorem (cf., e.g., \cite[Proposition
1.38]{tao}).
\begin{proposition}\label{AIA}
	Let $\cN$ and $\cT$ be two
	Banach spaces. Consider a linear operator
	$\cB:\cN\to \cT$ such that 
	\begin{equation}\label{aia1}
	\|\cB f\|_{\cT}\leq
	C_0\|f\|_{\cN},\quad \forall f\in\cN,
	\end{equation}
	for some
	$C_0>0$, and suppose to have
	 a nonlinear
	operator $F:\cT\to\cN$ with
	$F(0)=0$ and Lipschitz bounds
	\begin{equation}\label{aia2}
	\|F(u)-F(v)\|_{\cN}\leq\frac{1}{2C_0}\|u-v\|_{\cT},
	\end{equation}
	for all $u,v$ in the ball
	$B_\mu:=\{u\in\cT:
	\|u\|_\cT\leq \mu \}$, for
	some $\mu>0$. Then, for all
	$u_{\rm lin}\in B_{\mu/2}$
	there exists a unique
	solution $u\in B_\mu$ to the
	equation $
	u=u_{\rm lin}+\cB F(u),
	$
	with the map $u_{lin}\mapsto
	u$ Lipschitz continuous with constant at
	most $2$.
\end{proposition}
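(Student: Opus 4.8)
The plan is to recast the fixed-point equation $u = u_{\rm lin} + \cB F(u)$ as a genuine fixed-point problem for the map $\Psi(u) := u_{\rm lin} + \cB F(u)$ and to apply the Banach contraction principle on the closed ball $B_\mu \subset \cT$. Since $\cT$ is a Banach space, $B_\mu$ is a complete metric space in the $\cT$-norm, so it suffices to check that $\Psi$ maps $B_\mu$ into itself and is a strict contraction there. The whole argument is elementary bookkeeping with the two hypotheses \eqref{aia1} and \eqref{aia2}; the only thing that requires care is how the specific constants $\mu/2$ (radius of the ball of data) and $1/(2C_0)$ (the Lipschitz constant of $F$) conspire to close the estimates.

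First I would verify the self-mapping property. For $u \in B_\mu$, write $\|\Psi(u)\|_\cT \leq \|u_{\rm lin}\|_\cT + \|\cB F(u)\|_\cT$. The first term is at most $\mu/2$ because $u_{\rm lin} \in B_{\mu/2}$. For the second, I would apply \eqref{aia1} to obtain $\|\cB F(u)\|_\cT \leq C_0 \|F(u)\|_\cN$, and then use $F(0)=0$ together with \eqref{aia2} — applied to the pair $u,0\in B_\mu$ — to bound $\|F(u)\|_\cN = \|F(u)-F(0)\|_\cN \leq \frac{1}{2C_0}\|u\|_\cT \leq \frac{\mu}{2C_0}$. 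Multiplying by $C_0$ gives $\|\cB F(u)\|_\cT \leq \mu/2$, hence $\|\Psi(u)\|_\cT \leq \mu$ and $\Psi(B_\mu)\subseteq B_\mu$.

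Next I would establish the contraction estimate. For $u,v\in B_\mu$, linearity of $\cB$ gives $\Psi(u)-\Psi(v)=\cB(F(u)-F(v))$, so \eqref{aia1} followed by \eqref{aia2} yields $\|\Psi(u)-\Psi(v)\|_\cT \leq C_0\|F(u)-F(v)\|_\cN \leq \frac12\|u-v\|_\cT$. Thus $\Psi$ is a contraction of constant $1/2$, and the Banach fixed-point theorem supplies a unique $u\in B_\mu$ with $\Psi(u)=u$, i.e. the desired solution. Finally, for the Lipschitz dependence I would take two data $u_{\rm lin}^{(1)},u_{\rm lin}^{(2)}\in B_{\mu/2}$ with solutions $u^{(1)},u^{(2)}$, subtract the two fixed-point identities, and reuse the contraction bound on the nonlinear term: $\|u^{(1)}-u^{(2)}\|_\cT \leq \|u_{\rm lin}^{(1)}-u_{\rm lin}^{(2)}\|_\cT + \frac12\|u^{(1)}-u^{(2)}\|_\cT$, which after absorbing the last term into the left-hand side gives $\|u^{(1)}-u^{(2)}\|_\cT \leq 2\|u_{\rm lin}^{(1)}-u_{\rm lin}^{(2)}\|_\cT$, the claimed Lipschitz constant $2$. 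There is no serious obstacle here: this is a standard quantitative form of the contraction mapping theorem, and the only real subtlety is noting that $0\in B_\mu$, so that \eqref{aia2} may legitimately be invoked with $v=0$ in the self-mapping step.
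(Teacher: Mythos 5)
Your proof is correct and is exactly the standard quantitative contraction-mapping argument; the paper does not prove Proposition \ref{AIA} itself but cites it from Tao's book (\cite[Proposition 1.38]{tao}), and your argument — self-mapping of $B_\mu$ via $F(0)=0$, contraction constant $\tfrac12$ from combining \eqref{aia1} with \eqref{aia2}, and absorption to get the Lipschitz constant $2$ — is precisely the intended one. No gaps.
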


%
%

\begin{proof}[Proof of Theorem \ref{T1}]
We apply Theorem \ref{T3.1} with $T=1$, $q=1$ and $w(x,\xi)=\la \xi\ra^s$. For every
$1\leq p< \infty$, the
operator $K_\beta(t)$ in \eqref{op2} is a bounded
operator on
${M}^{p,1}_s(\rd)$, and there exists a $C>0$ such that 
\begin{equation}\label{G1} \|K_\beta(t)
u_0\|_{{M}^{p,1}_s}\leq
C \|u_0\|_{{M}^{p,1}_s},\quad
t\in [0,1].
\end{equation}
Notice that such result provides the uniformity of the constant C, when t varies in $[0,1]$.
 Now  the result follows by Proposition \ref{AIA}, with
$\cT=\cN=C^0([0,T];{M}^{p,1}_s)$, the
linear operator $\cB$ in \eqref{op2},  where  $0<T\leq 1$ will be
chosen  later on. Here $u_{\rm
lin}:=K_\beta(t)u_0$ is in the ball $B_{\mu/2}\subset\cT$ by
\eqref{G1},  if $\mu$ is sufficiently large, depending
on the radius  $R$ in $M^{p,1}_s(\rd)$ in the assumptions. Using Minkowski's integral inequality and \eqref{G1}, we
obtain \eqref{aia1}. Namely,
$$\|\cB u\|_{{M}^{p,1}_s}\leq T C\|u\|_{_{{M}^{p,1}_{s}}}.
$$
\par The proof of  Condition \eqref{aia2} can be found   in \cite[proof of Theorem 4.1]{CNwave}. Hence, by choosing $T$
small enough we
prove the 
existence, and also the
uniqueness among the solution
in $\cT$ with norm $O(R)$  (with $R$ being the radius of the ball $B_R$, centred in $0$, in $M^{p,1}_s(\rd)$).
Standard continuity arguments allow to eliminate the last constraint
(see, e.g., \cite[Proposition 3.8]{tao}).
For $p=\infty$, by repeating the argument above, one can obtain well-posedness when the initial datum is in    $$\mathcal{M}^{\infty,1}_s(\rd):=\overline{\cS}^{M^{\infty,1}_s}(\rd).$$
\end{proof}

Observe that similar results were obtained in \cite[Theorem 1.1]{nicola}.

We conclude this   note by addressing the reader  to open problems in this field.

First, it is still not clear whether better results  can be obtained when considering  the nonlinearity
\begin{equation} \label{PW}
F(u)=F_k(u)=\lambda
|u|^{2k}u=\lambda
u^{k+1}\bar{u}^k, \quad
\lambda\in\mathbb{C},\
k\in\N.
\end{equation}
In fact, this was the case for the wave and vibrating place equation, cf. \cite{CNwave,CZ}, where more general modulation spaces were considered.

Moreover, another open question is the well-posedness  of the Cauchy problem \eqref{cpw} with initial datum $u_0\in M^{p,q}_s(\rd)$,  $0<p\leq \infty$, $0<q\leq1$. We conjecture that the result holds true as well, but the techniques employed so far do not apply in this case.

\section*{Acknowledgements}
The author would like to thank Professors Fabio
Nicola and Luigi Rodino for fruitful conversations and comments.

\end{document}